\definecolor{l-gray}{gray}{0.5}
\def\pullback{
 \ar@{-}[]+R+<3pt,-1pt>;[]+RD+<3pt,-3pt>%
 \ar@{-}[]+D+<1pt,-3pt>;[]+RD+<3pt,-3pt>}
 \def\lowpullback{
 \ar@{-}[]+R+<6pt,-6pt>;[]+D+<15pt,-16pt>;[]+RD+<6pt,-6pt>%
 \ar@{-}[]+D+<4pt,-10pt>;[]+RD+<6pt,-16pt>}
\def\pullbackdots{%
 \ar@{.}[]+R+<6pt,-1pt>;[]+RD+<6pt,-6pt>%
 \ar@{.}[]+D+<1pt,-6pt>;[]+RD+<6pt,-6pt>}
\def\pushout{%
 \ar@{-}[]+L+<-5pt,1pt>;[]+LU+<-5pt,5pt>%
 \ar@{-}[]+U+<-1pt,5pt>;[]+LU+<-5pt,5pt>}
\def\uppushout{%
 \ar@[Gray]@{-}[]+L+<-5pt,-1pt>;[]+LD+<-5pt,-5pt>%
 \ar@[Gray]@{-}[]+D+<-1pt,-5pt>;[]+LD+<-5pt,-5pt>}
 \def\lowuppushout{%
 \ar@{-}[]+L+<-10pt,-2pt>;[]+LD+<-10pt,-10pt>%
 \ar@{-}[]+D+<-2pt,-10pt>;[]+LD+<-10pt,-10pt>}
\def\splitpullback{%
 \ar@{-}[]+R+<6pt,-.51ex>;[]+RD+<6pt,-6pt>%
 \ar@{-}[]+D+<.51ex,-6pt>;[]+RD+<6pt,-6pt>}
\def\skewpullback{%
 \ar@{-}[]+R+<6pt,-1pt>;[]+RD+<-7pt,-8pt>%
 \ar@{-}[]+D+<-12pt,-8pt>;[]+RD+<-7pt,-8pt>}
\def\dottedpullback{%
 \ar@{.}[]+R+<6pt,-1pt>;[]+RD+<6pt,-6pt>%
 \ar@{.}[]+D+<1pt,-6pt>;[]+RD+<6pt,-6pt>}
\let\origappendix\appendix 
\renewcommand\appendix{\clearpage\pagenumbering{roman}\origappendix}
\newtheorem{theorem}[subsection]{Theorem}
\newtheorem{observation}[subsection]{Observation}	
\newtheorem{proposition}[subsection]{Proposition}	
\newtheorem{lemma}[subsection]{Lemma}
\newtheorem{definition}[subsection]{Definition}
\newtheorem{remark}[subsection]{Remark}
\newtheorem{algorithm}[subsection]{Algorithm}
\newcommand{\N}{\mathbb{N}}
\newcommand{\SET}{\mathsf{Set}}
\newcommand{\GRP}{\mathsf{Grp}}
\newcommand*{\defeq}{\mathrel{\vcenter{\baselineskip0.5ex \lineskiplimit0pt
                     \hbox{\scriptsize.}\hbox{\scriptsize.}}}%
                     =}
\DeclareMathOperator{\Fg}{F_g}
\DeclareMathOperator{\id}{id}
\DeclareMathOperator{\Eq}{Eq}
\DeclareMathOperator{\Pth}{Pth}
\DeclareMathOperator{\Ker}{Ker}
\definecolor{britishracinggreen}{rgb}{0.0, 0.26, 0.15}
\definecolor{darkpink}{rgb}{0.91, 0.33, 0.5}
\newcommand{\K}[1]{{\rm K}_{#1}}
\newcommand{\Test}[1]{{\rm TestPair(}#1{\rm )}}
\begin{document}

\title[The intersection of the kernels of two morphisms between free groups]{Rewriting the elements in the intersection of the kernels of two morphisms between free groups}
\author{Fran\c{c}ois Renaud}

\email[Fran\c{c}ois Renaud]{francois.renaud@uclouvain.be}

\address{Institut de Recherche en Math\'ematique et Physique, Universit\'e catholique de Louvain, che\-min du cyclotron~2 bte~L7.01.02, B--1348 Louvain-la-Neuve, Belgium}

\subjclass[2020]{20F10; 20J15; 20E05; 57K12; 18E50}
\keywords{Groups, intersection of kernels, homomorphisms between free groups, double coverings of racks and quandles}
\thanks{The author is a Ph.D.\ student funded by \textit{Formation à la Recherche dans l'Industrie et dans l'Agriculture} (\textit{FRIA}) as part of \textit{Fonds de la Recherche Scientifique - FNRS}}

\begin{abstract}
Let $\Fg$ be the free group functor, left adjoint to the forgetful functor between the category of groups $\GRP$ and the category of sets $\SET$. Let $f\colon A \to B$ and $h\colon A \to C$ be two functions in $\SET$ and let $\Ker(\Fg(f))$ and $\Ker(\Fg(h))$ be the kernels of the induced morphisms between free groups. Provided that the kernel pairs $\Eq(f)$ and $\Eq(h)$ of $f$ and $h$ permute (such as it is the case when the pushout of $f$ and $h$ is a double extension in $\SET$), this short article describes a method to rewrite a general element in the intersection $\Ker(\Fg(f)) \cap \Ker(\Fg(g))$ as a product of generators in $A$ which is $\langle f,h \rangle$-symmetric in the sense of the higher covering theory of racks and quandles.
\end{abstract}
\maketitle

\section{Motivations and context of study}\label{SectionMotivationsAndContext}
Our interest in the ``shape'' of a general element in the intersection of the kernels of two morphisms between free groups arises from research about the concept of a double covering of racks and quandles \cite{Ren2020,Ren2021}. Understanding this research, its context and its motivations is not strictly necessary for understanding this paper's result. However, the reader might appreciate some details about the context in which our result arises.

\emph{Quandles} \cite{Joy1979}, and the more general \emph{racks} (J.C.~Conway and G.C.~Wraith in 1959, see \cite{FenRou1992}), are algebraic structures that capture a certain notion of symmetry, which is used for instance in knot theory \cite{Joy1982,Bri1988,FenRou1992,Dri1992,FeRoSa1995,CaKaSa2001,CJKLS2003,Kau2012,Eis2014}, geometry \cite{Loos1969,Ber2008,Hel2012}, and outside of mathematics in computer science and physics (see for instance \cite{FenRou1992} for references). One of the original motivations for the study of such structures comes from the interest in the algebraic structure obtained from a group when the operation of multiplication is ``forgotten'', and the only operation that remains is \emph{group conjugation}. The links between the theory of racks and quandles on the one hand, and the theory of groups on the other, are tight. In particular, the development of a higher covering theory of racks and quandles started in \cite{Ren2020,Ren2021} is to be viewed in parallel with corresponding breakthroughs in group theory. Based on the development and applications of the categorical Galois theory of group, \cite{Jan1990,Jan1991,JanComo1991,JK1994}, the classical and higher \emph{Hopf formulae} \cite{EilMac1943,BroEll1988} were rediscovered (see \cite{Jan2008}) and generalized as in \cite{EvGrVd2008}. As it is explained in the introduction of \cite{Ren2020} and in the conclusions of \cite{Ren2021}, drawing the links between the homology of racks and quandles and the higher covering theory of these, via the study of generalized Hopf formulae \cite{EvGrVd2008,Jan2008} is one of the possible outcomes of such developments. Besides the links with group theory, categorical Galois theory also explains the parallels between the covering theory of racks and quandles and the covering theory of topological spaces. By extension, the development of a homotopy theory in racks and quandles is an interesting potential application.

The first step which we took towards these outcomes, was the definition and study of \emph{double coverings} of racks and quandles \cite{Ren2020}. These are special commutative squares of surjective morphisms of racks (or quandles) which we characterized (roughly speaking) by the ``trivial action'' of certain ``related'' \emph{double extensions} of groups.

More precisely, given a commutative square of surjective group homomorphisms
\begin{equation}\label{DiagramReflectionSquare} \vcenter{\xymatrix @R=0.8pt @ C=11pt{
G_1 \ar[rrr]^{f_1} \ar[rd] |-{p}  \ar[dddd]_{f_G} & & &  H_1 \ar[dddd]|{f_H} \\
 & P  \ar[rru]|-{\pi_2} \ar[lddd]|-{\pi_1}  \\
\\
\\
G_0  \ar[rrr]_{f_0} &  & & H_0
}}
\end{equation}
where $P$ is the subgroup of the product $G_0 \times H_1$ given by the pairs $(g,h)$ such that $f_0(g) = f_H(h)$ (i.e.~$P$ is the \emph{pullback} \cite{McLane1997} of $f_0$ and $f_H$), then the square \eqref{DiagramReflectionSquare} is said to be a \emph{double extension of groups} if the induced map $p\colon G_1 \to P \colon g \mapsto (f_G(g),f_1(g))$ is a surjection -- here $\pi_1$ and $\pi_2$ are the usual product projections. Define double extensions in $\SET$ similarly by replacing all surjective group homomorphisms in the above by surjective functions between sets -- double extensions of racks and quandles are defined in the same way. This concept of double extension is used (for instance in the covering theories of interest --see also \cite{Ever2010}) as a two-dimensional equivalent of a surjection (or more precisely \emph{regular epimorphism} \cite{McLane1997}).

Then, given a double extension of racks (or quandles), say $\alpha$, we may study its image in $\GRP$ under a certain \emph{functor}, i.e.~we look at an ``induced'' double extension $\Pth(\alpha)$ of the form \eqref{DiagramReflectionSquare}, where each group $G_1$, $H_1$, $G_0$ and $H_0$ is generated (but not necessarily freely generated) by the elements in the underlying sets of $\alpha$. We then identify ``special products of such generators'' in $G_1$, which are called \emph{$\langle f_1, f_G \rangle$-symmetric} (see Definition \ref{DefinitionK2}), and which are required to ``act trivially on $\alpha$'' for $\alpha$ to be a double covering. The set of such \emph{$\langle f_1, f_G \rangle$-symmetric} products of generators in $G_1$ form a normal subgroup of $G_1$. We are interested in characterizations of this subgroup, for instance in terms of the intersection of the kernels of $f_1$ and $f_G$.

Now we observe that $\Pth(\alpha)$ can be presented as a canonical quotient ($\Fg( \alpha) \twoheadrightarrow \Pth(\alpha)$) of the double extension $\Fg( \alpha)$ of the form \eqref{DiagramReflectionSquare}, obtained by taking \emph{free} groups on the underlying sets of $\alpha$, i.e.~such that $G_1$, $H_1$, $G_0$ and $H_0$ are free groups (on the underlying sets of $\alpha$). The result of this paper mainly consists in establishing that, in such a double extension of free groups $\Fg(\alpha)$, the elements in $\Ker(p)$, the kernel of $p$, which coincides with the intersection $\Ker(f_1) \cap \Ker(f_G)$ of the kernels of $f_1$ and $f_G$, are exactly those elements which are $\langle f_1, f_G \rangle$-symmetric in $G_1$. In order to extend this result to the double extension $\Pth(\alpha)$, it remains to describe what is preserved by the canonical presentation $\Fg( \alpha) \twoheadrightarrow \Pth(\alpha)$ (which goes beyond the scope of this article).


\section{Results}
\subsection{Preliminaries}\label{SectionPreliminaries}
Recall that given two groups $G$ and $H$ (written multiplicatively), the kernel $\Ker(h)$ of a group homomorphism $h\colon{G \to H}$ is the set of elements $a\in G$ which are sent by $h$ to the neutral element $e$ in $H$. Then $\Ker(h)$ is always a normal subgroup of $G$, i.e.~it contains the neutral element $e$ in $G$, it is closed under products and inverses, and it is closed under conjugation by elements of $G$: for $a \in \Ker(h)$ and $x \in G$ we have $x a x^{-1} \in \Ker(h)$.

Given a set $A$, the free group on $A$ is characterised as the group $\Fg(A)$ whose elements are equivalence classes of words on the alphabet $A \cup A^{-1}$ -- i.e.~sequences of elements $a_1^{\delta_1} \cdots a_n^{\delta_n}$ of length $n$, where $n$ is a natural number, $a_i \in A$ and $\delta_i$ is an exponent $1$ or $-1$, for each index $i$ with $1\leq i \leq n$ (we equivalently write $a$ for $a^1$). Two such words are identified if one can be obtained from the other by removing or adding pairs of letters $a a^{-1}$ or $a^{-1}a$ anywhere in the words, see also Observation \ref{ObservationPairingElementsInKernelWords} below. The product of two equivalence classes of words is defined as the equivalence class of the juxtaposition of any two words representing the factors. The neutral element is given by the equivalence class of the empty word $\emptyset$. A function $f\colon{A \to B}$ between two sets induces a group homomorphism $\Fg(f)$ between the free groups $\Fg(A)$ and $\Fg(B)$ which sends an element of $\Fg(A)$, represented by the word $a_1^{\delta_1} \cdots a_n^{\delta_n}$ to the element of $\Fg(B)$, represented by the word $f(a_1)^{\delta_1} \cdots f(a_n)^{\delta_n}$.

The kernel pair of $f$ in $\SET$ is the equivalence relation $\Eq(f) \subseteq A \times A$ defined as the subset of those pairs $(a,b) \in A \times A$ such that $f(a)=f(b)$. Similarly, the kernel pair of the group homomorphism $f \colon G \to H$ in $\GRP$ is given by the kernel pair of the function $f$ in $\SET$, equipped with the usual group structure induced by the group structure in each component.

In order to understand the hypotheses of our result (Lemma \ref{LemmaKernelOfFgFFgH}) we further recall that given two functions $f\colon {A \to B}$ and $h \colon{A \to C}$, the composite $\Eq(f) \circ \Eq(h)$ is defined as the subset of $A \times A$ given by the pairs $(a,b)$ such that there exists $c \in A$ for which $(a,c) \in \Eq(f)$ and $(c,b) \in \Eq(h)$. In general, $\Eq(f) \circ \Eq(h) \neq \Eq(h) \circ \Eq(f)$. However if $f$ and $h$ are the \emph{initial maps} of a double extension, such as $f_1$ and $f_G$  in \eqref{DiagramReflectionSquare}, then we have $\Eq(f) \circ \Eq(h) = \Eq(h) \circ \Eq(f)$ (see \cite[Proposition 5.4]{CKP1993} and \cite[Lemma 1.2]{Bou2003}).

More explicitely, the requirement $\Eq(f) \circ \Eq(h) = \Eq(h) \circ \Eq(f)$ means that for each $a$, $b$ and $c$ as above, there exists $d \in A$ such that $(a,d) \in \Eq(h)$ and $(d,b) \in \Eq(f)$, and conversely given such $a$, $b$ and $d$, one must be able to find some $c$ satisfying the above (i.e.~$f(a) = f(c)$ and $h(b) = h(c)$). In other words, one must always be able to ``complete'' triples such as 
\begin{equation}\label{EquationTriples} \vcenter{\xymatrix @C= 15pt@R=10pt {
a \ar@{-}[r]|-{f}   & c  \ar@{..}[d]|-{h} \\
  & b
}} \qquad \text{and} \qquad \vcenter{\xymatrix @C= 15pt@R=10pt {
a \ar@{..}[d]|-{h}  & \\
d \ar@{-}[r]|-{f}  & b
}} \qquad \text{into a full square}\qquad \vcenter{\xymatrix @C= 15pt@R=10pt {
a \ar@{-}[r]|-{f}  \ar@{..}[d]|-{h}  & c  \ar@{..}[d]|-{h} \\
d \ar@{-}[r]|-{f}  & b
}}\end{equation} 
where vertical lines (labelled by $h$) link elements which are identified by $h$ and horizontal lines (labelled by $f$) link elements which are identified by $f$. The equivalence relations $\Eq(f)$ and $\Eq(h)$ are then said to \emph{commute}  \cite{MalSbo1954,CKP1993}. 

Note that in the category of groups, two congruences on a group always commute in this sense (see \emph{Mal'tsev categories} \cite{CKP1993}). As a consequence (\cite[Proposition 5.4]{CKP1993}), the double extensions of groups coincide with \emph{pushout} \cite{McLane1997} squares of surjective group homomorphisms. In the category of sets, however, equivalence relations do not commute in general. Still, the kernel pairs of initial maps in a double extension of sets do commute. Considering the context of study described in Section \ref{SectionMotivationsAndContext}, this is why the requirement on commuting kernel pairs arises naturally in the hypotheses of Lemma \ref{LemmaKernelOfFgFFgH}.

In the following two sections, we recall the concept of \emph{symmetric paths}, which are the ``products of generators with specific symmetric features'' of interest to us. We start with the one-dimensional version of these ideas and move to the two-dimensional context in Subsection \ref{SectionTwoDimensionalSymmetricPaths}.

\subsection{One dimensional symmetric paths}
In \cite{Ren2020}, we introduce the following concepts and results (see also \cite{BonSta2021}).

\begin{definition}\label{DefinitionK1}
Given a group homomorphism $f \colon{G \to H}$, and a chosen subset $A \subseteq G$, we define (implicitly with respect to $A$)
\begin{enumerate}[label=(\roman*)]
\item two elements $g_a$ and $g_b$ in $G$ are \emph{$f$-symmetric (to each other)} if there exists $n\in \N$ and a sequence of pairs $(a_1,b_1)$, $\ldots$, $(a_n,b_n)$ in $(A\times A)$, such that \[ f(a_i)=f(b_i), \quad g_a = a_1^{\delta_1} \cdots  a_n^{\delta_n}, \quad \text{ and } \quad g_b = b_1^{\delta_1} \cdots  b_n^{\delta_n},\]
for some $\delta_i \in \lbrace -1,\, 1\rbrace$, where $1\leq i \leq n$. Alternatively say that $g_a$ and $g_b$ are an \emph{$f$-symmetric pair}.
\item $\K{f}$ is the set of \emph{$f$-symmetric paths} defined as the elements $g \in G$ such that $g = g_a g_b^{-1}$ for some $g_a$ and $g_b \in G$ which are $f$-symmetric to each other.
\end{enumerate}
\end{definition}

Observe that the elements of $\K{f}$ are in the kernel of $f$. The idea is to understand when $\K{f}$ actually describes all the elements in the kernel of $f$. For instance if the chosen subset $A$ is the whole group, or if it contains $\Ker(f)$ (the kernel of $f$), then we easily derive that $\K{f} = \Ker(f)$. For a general $f\colon G \to H$ as in Definition \ref{DefinitionK1}, the condition $\K{f} = \Ker(f)$ expresses the fact that the kernel of $f$ in $\GRP$ is entirely described by the restriction $f \colon A \to f(A)$ of the underlying function $f$ in the category $\SET$. This is for instance the case when $f\colon G \to H = \Fg(h) \colon \Fg(A) \to \Fg(B)$ is the group homomorphism induced by a function $h \colon A \to B$ in $\SET$ as in Subsection \ref{SectionPreliminaries} (where the chosen subset of $G = \Fg(A)$ is $A$ -- see Proposition \ref{LemmaKernelOfFgF}). Even though in the examples of interest, the chosen subset $A$ is a generating set of $G$ (i.e.~such that the subgroup $\langle a \, \mid\, a \in A \rangle_G$ of $G$ generated by the elements of $A$ is equal to $G$), it is neither sufficient, nor necessary, for $A$ to be such a generating set of $G$ in general. For instance, consider the quotient map $f \colon \Fg(\lbrace a \rbrace) \to \lbrace e,\ a \rbrace$ where $f(a^n) = e$ if $n$ is even and $f(a^n) = a$ if $n$ is odd. The element $a^2$ is in $\Ker(f)$. However, if $\lbrace a \rbrace$ is our chosen set (of generators) of $\Fg(\lbrace a \rbrace)$, the element $a^2$ is not an $f$-symmetric path. Conversely, Definition \ref{DefinitionK1} and the condition $\K{f} = \Ker(f)$ still make sense when $A$ is merely a subset of $G$ which is not generating. For instance, consider the product $\Fg(h) \times \id_{G'} \colon \Fg(A) \times G' \to \Fg(B) \times G'$ of $\Fg(h) \colon \Fg(A) \to \Fg(B)$ with the identity function on some other group $G'$. Then $\Ker(\Fg(h) \times \id_{G'}) = \Ker(\Fg(h)) \times \lbrace e \rbrace = \K{\Fg(h) \times \id_{G'}}$ with chosen subset $A \times \lbrace e \rbrace$.

\begin{lemma}\label{LemmaGroupCharacterization1}
Given the hypotheses of Definition \ref{DefinitionK1}, the set of $f$-symmetric paths $\K{f} \subseteq G$ defines a normal subgroup in $G$. More precisely it is the normal subgroup generated by the elements of the form $ab^{-1}$ such that  $a$, $b \in A$, and $f(a)=f(b)$:
\[ \K{f} = G_f \defeq \langle\langle ab^{-1} \mid (a,b) \in A \times A,\ f(a)=f(b) \rangle \rangle_G. \]
\end{lemma}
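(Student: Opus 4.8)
The plan is to prove the two inclusions $\K{f}\subseteq G_f$ and $G_f\subseteq\K{f}$ separately, and then read off normality from the resulting equality, since $G_f$ is a normal subgroup by construction. For the inclusion $\K{f}\subseteq G_f$ I would argue in the quotient group $G/G_f$ with canonical projection $q\colon G\to G/G_f$. If $g=g_ag_b^{-1}\in\K{f}$ is witnessed by pairs $(a_i,b_i)$ with $f(a_i)=f(b_i)$ and exponents $\delta_i$, then each generator $a_ib_i^{-1}$ lies in $G_f$, so $q(a_i)=q(b_i)$ and hence $q(a_i^{\delta_i})=q(b_i^{\delta_i})$ for every $i$. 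Multiplying these relations gives $q(g_a)=q(g_b)$, so $q(g_ag_b^{-1})$ is trivial and $g\in\Ker(q)=G_f$. This half is routine and needs no structural input.

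For the reverse inclusion, together with the subgroup and normality claims, the cleanest route I can see is to lift the defining data to the level of pairs. Consider the set $S\subseteq G\times G$ of all $f$-symmetric pairs $(g_a,g_b)$. I would first check that $S$ is a subgroup of $G\times G$: it is closed under componentwise multiplication by concatenating the two defining sequences (appending the exponent pattern $\delta'$ after $\delta$ and the pairs $(a'_j,b'_j)$ after the $(a_i,b_i)$), and closed under inversion by reversing the sequence and negating the exponents. Crucially, $S$ contains the ``diagonal'' pairs $(w,w)$ for any word $w=c_1^{\gamma_1}\cdots c_r^{\gamma_r}$ in the elements of $A$, realized by the trivial pairs $(c_l,c_l)$ with $f(c_l)=f(c_l)$. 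The key reduction is then: for $(g_a,g_b)\in S$, the diagonal pair $(g_b,g_b)$ also lies in $S$, so $(g_a,g_b)\cdot(g_b,g_b)^{-1}=(g_ag_b^{-1},e)\in S$. Thus $g\in\K{f}$ if and only if $(g,e)\in S$, which identifies $\K{f}=\{x\in G \mid (x,e)\in S\}$ with the first component of $S\cap(G\times\{e\})$. This description makes $\K{f}$ visibly closed under products and inverses, and conjugation by the diagonal pair $(w,w)$ shows $(wxw^{-1},e)\in S$, so $\K{f}$ is stable under conjugation by every $w\in\langle A\rangle$; in the situation of interest, where $A$ generates $G$, this is conjugation by all of $G$. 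Finally each generator $ab^{-1}$ lies in $\K{f}$ (take the single pair $(a,b)$ with $\delta_1=1$, giving $(ab^{-1},e)\in S$), so $\K{f}$ is a normal subgroup containing the generators of $G_f$, whence $G_f\subseteq\K{f}$ and the two inclusions combine to $\K{f}=G_f$.

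The step I expect to be the main obstacle is exactly the closure of $\K{f}$ under products, and it is worth isolating why it is delicate. The naive idea of concatenating the defining sequences of $g=g_ag_b^{-1}$ and $g'=g'_a{g'_b}^{-1}$ produces the pair $(g_ag'_a,\,g_bg'_b)$, whose associated path is $g_ag'_a{g'_b}^{-1}g_b^{-1}$, not the product $gg'=g_ag_b^{-1}g'_a{g'_b}^{-1}$; the trouble is that the assignment $(x,y)\mapsto xy^{-1}$ is not a group homomorphism, so $\K{f}$ is the image of the subgroup $S$ under a non-homomorphic map and has no reason a priori to be a subgroup. The diagonal trick above is precisely what repairs this: by absorbing $g_b$ into a diagonal factor one normalises every $f$-symmetric pair to the form $(x,e)$, and the collection of such $x$ is closed under the group operations for the transparent reason that $S$ itself is a subgroup. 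Once this reduction is in place the remaining verifications are mechanical, and normality of $\K{f}$ follows either directly from the conjugation computation or formally from the identity $\K{f}=G_f$.
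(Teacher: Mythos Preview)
The paper does not actually supply a proof of this lemma: the sentence ``In \cite{Ren2020}, we introduce the following concepts and results'' covers both Definition~\ref{DefinitionK1} and Lemma~\ref{LemmaGroupCharacterization1}, and the text moves straight on to Observation~\ref{ObservationPairingElementsInKernelWords}. So there is no argument in the paper to compare yours against.

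Your argument is correct and conceptually clean. Lifting to the subgroup $S\subseteq G\times G$ of $f$-symmetric pairs and then normalising every pair to the form $(x,e)$ via multiplication by the diagonal $(g_b,g_b)^{-1}$ is precisely the device that repairs the failure of $(x,y)\mapsto xy^{-1}$ to be a homomorphism; your diagnosis in the last paragraph of why closure under products is the delicate point, and how the diagonal trick resolves it, is exactly right. The inclusion $\K{f}\subseteq G_f$ via the quotient map is routine, as you say.

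One point deserves emphasis. You correctly hedge that normality of $\K{f}$ in $G$ comes from conjugation by diagonal pairs $(w,w)$ with $w\in\langle A\rangle$, and hence requires $A$ to generate $G$. Definition~\ref{DefinitionK1} as printed does \emph{not} impose this (whereas the two-dimensional Definition~\ref{DefinitionK2} does). This is not mere caution on your part: without the generating hypothesis the lemma can fail. For instance, take $G=S_4$, $A=\{(12),(34)\}$ and $f$ any map identifying the two elements of $A$; then $\K{f}=\{e,(12)(34)\}$, which is not normal in $S_4$, while $G_f$ is the full Klein four-group $\{e,(12)(34),(13)(24),(14)(23)\}$. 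So your proof establishes the lemma under the hypothesis that is actually used everywhere the lemma is applied in the paper, and that hypothesis is genuinely needed.
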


\begin{observation}\label{ObservationPairingElementsInKernelWords}
Consider a function $f\colon {A \to B}$, and a word $\nu = a_1^{\delta_1} \cdots a_n^{\delta_n}$ with $a_i \in A$ and $\delta_i \in \lbrace -1,\, 1\rbrace$, for $1 \leq i \leq n$. This word represents an element $g$ in the free group $\Fg(A)$. As usual, a \emph{reduction} of $\nu$ consists in eliminating, in the word $\nu$, an adjacent pair $a_i^{\delta_i}a_{i+1}^{\delta_{i+1}}$ such that $\delta_{i} = -\delta_{i+1}$ and $a_{i} = a_{i+1}$. Every element $g \in \Fg(A)$ represented by a word $\nu$ admits a unique \emph{normal form} i.e.~a word $\nu'$ obtained from $\nu$ after a sequence of reductions, such that there is no reduction possible in $\nu'$, but $\nu'$ still represents the same element $g$ in $\Fg(A)$.

Suppose that $\nu$ represents an element $g$ which is in the kernel $\Ker(\Fg(f))$. The normal form of the word $f[\nu] \defeq f(a_1)^{\delta_1} \cdots f(a_n)^{\delta_n}$ (which represents $\Fg(f)(g) = e \in \Fg(B)$) is the empty word $\emptyset$, and thus there is a sequence of reductions of $f[\nu]$ such that the end result is $\emptyset$. From this sequence of reductions, we may deduce that $n=2m$ for some $m \in \N$ and the letters in the word (or sequence) $\nu$ organize themselves in $m$ pairs $(a_i^{\delta_i},a_j^{\delta_j})$ (the pre-images of those pairs that are reduced at some point in the aforementioned sequence of reductions) such that $i < j$, $f(a_i)= f(a_j)$, $\delta_i = - \delta_j$, each letter of the word $\nu$ appears in only one such pair and finally given any two such pairs $(a_i^{\delta_i},a_j^{\delta_j})$ and $(a_l^{\delta_l},a_m^{\delta_m})$, then $l < i$ (respectively $l> i$) if and only $m> j$ (respectively $m< j$),  i.e.~drawing lines which link those letters of the word $\nu$ that are identified by the pairing, none of these lines can cross. 
\[\xymatrix@R=20pt@C=4pt{ \\ 
a_1^{\delta_1}\ar@{{}{-}{}}@/^7ex/[rrrrr] & a_2^{\delta_2} \ar@{{}{-}{}}@/^4ex/[r] & a_3^{\delta_3} & a_4^{\delta_4} \ar@{{}{-}{}}@/^4ex/[r] & a_5^{\delta_5}  & a_6^{\delta_6} & a_7^{\delta_7} \ar@<1ex>@{{}{-}{}}@/^8ex/[rrrrrrr] & a_8^{\delta_8}\ar@{{}{-}{}}@/^4ex/[r]  & a_9^{\delta_9} & a_{10}^{\delta_{10}} \ar@{{}{-}{}}@/^6ex/[rrr] & a_{11}^{\delta_{11}} \ar@{{}{-}{}}@/^4ex/[r]  & a_{12}^{\delta_{12}} & a_{13}^{\delta_{13}} & a_{14}^{\delta_{14}}}\]
Given such a pairing of the letters of $\nu$, for each $k \in \lbrace 1,\ \ldots,\ n \rbrace$ we write $(a_{i_k}^{\delta_{i_k}},a_{j_k}^{\delta_{j_k}})$ for the unique pair such that either $i_k = k$ or $j_k=k$. Note that, conversely, any element $g$ in $\Fg(A)$ which is represented by a word $\nu$ which admits such a pairing of its letters, is necessarily in $\Ker(\Fg(f))$.
\end{observation}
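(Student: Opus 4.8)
The plan is to reduce everything to a single combinatorial fact: a sequence of reductions taking a signed word to the empty word induces a non-crossing perfect matching of its letters, and conversely. For the forward direction, since $g \in \Ker(\Fg(f))$ we have $\Fg(f)(g) = e$ in $\Fg(B)$, and as $f[\nu]$ is a word representing $e$ the uniqueness of the normal form (recalled above) forces the normal form of $f[\nu]$ to be empty; hence there is a finite chain of reductions $f[\nu] = w_0 \rightsquigarrow w_1 \rightsquigarrow \cdots \rightsquigarrow w_m = \emptyset$. Each step deletes exactly one adjacent inverse pair and alters no other letter, so, tracing every deleted letter back to its original position, I would declare $i$ and $j$ \emph{partnered} when the letters originally at positions $i$ and $j$ are the two letters removed by one and the same step. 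Because the chain ends at $\emptyset$, every position is deleted exactly once; thus $n = 2m$ and partnering is a perfect matching. Since partnered positions carry mutually inverse letters at the instant of their cancellation and reductions never change the letter at a surviving position, partnering $i$ with $j$ (say $i<j$) gives precisely $f(a_i)=f(a_j)$ and $\delta_i = -\delta_j$.

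The core of the argument is the non-crossing property, which I would prove by induction on $m$. The first reduction $w_0 \rightsquigarrow w_1$ removes two letters that are adjacent \emph{in $f[\nu]$ itself}, at positions $k$ and $k+1$. Deleting these two positions produces a word $\nu'$ of length $n-2$ whose reduction chain is $w_1 \rightsquigarrow \cdots \rightsquigarrow \emptyset$, so by the induction hypothesis its induced matching on $\{1,\dots,n\}\setminus\{k,k+1\}$ is non-crossing. Re-inserting the pair $\{k,k+1\}$ cannot introduce a crossing: any other pair $\{p,q\}$ satisfies $p,q \notin \{k,k+1\}$, and since no index lies strictly between $k$ and $k+1$, the interval spanned by $\{p,q\}$ contains both of $k,k+1$ or neither, so the two arcs are nested or disjoint. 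This establishes the stated non-crossing pairing of the letters of $\nu$.

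For the converse, suppose $\nu$ carries a non-crossing perfect matching with $f(a_i)=f(a_j)$ and $\delta_i=-\delta_j$ on each pair; I would show that $f[\nu]$ reduces to $\emptyset$, giving $\Fg(f)(g)=e$ and hence $g \in \Ker(\Fg(f))$. Pick a pair $\{i,j\}$ with $i<j$ of minimal gap $j-i$. If $j>i+1$, the partner of position $i+1$ would, by non-crossing with $\{i,j\}$, be forced into the open interval between $i$ and $j$ and hence to have a strictly smaller gap, a contradiction; so $j=i+1$. Positions $i$ and $i+1$ then carry adjacent mutually inverse letters of $f[\nu]$, which I reduce; discarding the pair $\{i,i+1\}$ leaves a non-crossing perfect matching with the same letter-and-sign conditions on a word of length $n-2$, and induction on $m$ concludes.

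I expect the main obstacle to be the bookkeeping behind the non-crossing property rather than any hard estimate: one must check that ``partnered at the instant of cancellation'' is well defined, and that the induction hypothesis is applied to the genuinely shorter word arising after the first (respectively an innermost) reduction. Once the reduction process is seen as building, one step at a time, a nested system of arcs — each newly created arc joining adjacent positions and therefore being innermost — both the planarity in the forward direction and the existence of a reducible adjacent pair in the converse follow at once.
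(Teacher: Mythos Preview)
Your argument is correct and follows exactly the idea the paper has in mind: track the sequence of reductions of $f[\nu]$ and pair each letter with the one it cancels against, then check that this pairing is non-crossing. The paper, however, treats this as an \emph{Observation} rather than a theorem and gives no proof beyond the sentence ``from this sequence of reductions, we may deduce that\ldots''; your inductive verification of the non-crossing property and your innermost-pair argument for the converse simply make explicit what the paper leaves to the reader.
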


Using this observation, we characterize the kernels of maps between free groups.

\begin{proposition}\label{LemmaKernelOfFgF}
Given a function $f\colon {A \to B}$, the kernel $\Ker(\Fg(f))$ of the induced group homomorphism $\Fg(f) \colon {\Fg(A) \to \Fg(B)}$ is given by the normal subgroup $\K{\Fg(f)}$ of $\Fg(f)$-symmetric paths (as in Definition \ref{DefinitionK1}): $\Ker(\Fg(f)) = \K{\Fg(f)}$.
\end{proposition}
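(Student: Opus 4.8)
The plan is to prove the two inclusions $\K{\Fg(f)} \subseteq \Ker(\Fg(f))$ and $\Ker(\Fg(f)) \subseteq \K{\Fg(f)}$ separately, the first being immediate and the second carrying all of the combinatorial content. For the inclusion $\K{\Fg(f)} \subseteq \Ker(\Fg(f))$ (which is the remark made just after Definition \ref{DefinitionK1}) I would take an $\Fg(f)$-symmetric path $g = g_a g_b^{-1}$ with $g_a = a_1^{\delta_1} \cdots a_n^{\delta_n}$ and $g_b = b_1^{\delta_1} \cdots b_n^{\delta_n}$ where $f(a_i) = f(b_i)$ for each $i$, and compute term by term that $\Fg(f)(g_a) = f(a_1)^{\delta_1} \cdots f(a_n)^{\delta_n} = f(b_1)^{\delta_1} \cdots f(b_n)^{\delta_n} = \Fg(f)(g_b)$, so that $\Fg(f)(g) = \Fg(f)(g_a)\Fg(f)(g_b)^{-1} = e$.

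For the reverse inclusion I would lean on Lemma \ref{LemmaGroupCharacterization1}, which identifies $\K{\Fg(f)}$ with the normal closure $G_{\Fg(f)}$ in $\Fg(A)$ of the elements $ab^{-1}$ with $a,b \in A$ and $f(a)=f(b)$; it then suffices to show that every $g \in \Ker(\Fg(f))$ lies in this normal subgroup. I would represent $g$ by a word $\nu = a_1^{\delta_1} \cdots a_n^{\delta_n}$ and invoke Observation \ref{ObservationPairingElementsInKernelWords}, obtaining $n = 2m$ together with a non-crossing pairing of the letters into $m$ pairs $(a_i^{\delta_i}, a_j^{\delta_j})$ with $i<j$, $f(a_i)=f(a_j)$ and $\delta_i = -\delta_j$. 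The argument is then an induction on $m$. The base case $m=0$ gives $g=e$. For $m \geq 1$, the non-crossing condition guarantees an \emph{adjacent} matched pair, i.e.\ an index $i$ for which $(a_i^{\delta_i}, a_{i+1}^{\delta_{i+1}})$ is a pair; writing $\nu = w\,(a_i^{\delta_i} a_{i+1}^{\delta_{i+1}})\, w'$ and setting $u = ww'$, the factor $a_i^{\delta_i} a_{i+1}^{\delta_{i+1}}$ is, up to inversion, a generator $ab^{-1}$ of $G_{\Fg(f)}$, and the identity $g = (w\,(a_i^{\delta_i} a_{i+1}^{\delta_{i+1}})\,w^{-1})\, u$ exhibits $g$ as the product of a conjugate of such a generator with $u$. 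Since $u$ is represented by the word obtained from $\nu$ by deleting the adjacent pair, it still belongs to $\Ker(\Fg(f))$ and inherits a non-crossing pairing on $2(m-1)$ letters, whence $u \in G_{\Fg(f)}$ by the inductive hypothesis; as $G_{\Fg(f)}$ is normal it also contains the conjugate factor, and therefore $g \in G_{\Fg(f)} = \K{\Fg(f)}$.

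The hard part will be the purely combinatorial lemma underlying the induction: that any non-crossing perfect matching on a linearly ordered set of $2m$ points contains an adjacent matched pair, and that deleting such a pair preserves every property listed in Observation \ref{ObservationPairingElementsInKernelWords}. I would establish the existence of an adjacent pair by choosing a matched pair $(i,j)$ with $j-i$ minimal and arguing that $j = i+1$: otherwise some position strictly between $i$ and $j$ would be matched, and by the non-crossing condition its partner would also lie strictly between $i$ and $j$, contradicting minimality of the gap. Stability under deletion is then routine, since removing two adjacent, oppositely signed letters of equal $f$-value neither introduces crossings among the surviving lines nor alters the image word beyond the cancellation of $f(a_i)^{\delta_i} f(a_{i+1})^{\delta_{i+1}}$, so the reduced word again represents a kernel element carrying the required non-crossing pairing. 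One could alternatively bypass Lemma \ref{LemmaGroupCharacterization1} and read off an explicit $\Fg(f)$-symmetric pair $(g_a,g_b)$ from the nested structure of the pairing, but routing through the normal-closure description keeps the bookkeeping to a minimum.
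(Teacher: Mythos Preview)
Your argument is correct, but the paper takes a different---and shorter---route for the inclusion $\Ker(\Fg(f)) \subseteq \K{\Fg(f)}$. Rather than passing through Lemma~\ref{LemmaGroupCharacterization1} and inducting by peeling off adjacent matched pairs, the paper builds the $\Fg(f)$-symmetric pair in one shot: given the non-crossing pairing on the word $\nu = a_1^{\delta_1}\cdots a_n^{\delta_n}$ from Observation~\ref{ObservationPairingElementsInKernelWords}, it defines a companion word $\nu' = b_1^{\delta_1}\cdots b_n^{\delta_n}$ by setting each $b_k \defeq a_{i_k}$, the \emph{left} endpoint of the pair containing position $k$. Then $\nu'$ reduces to the empty word (matched positions now carry identical letters with opposite signs, and the non-crossing condition lets these cancel), so the element $h$ it represents is trivial and $g = gh^{-1}$; meanwhile $f(a_k) = f(b_k)$ for every $k$, so $(g,h)$ is an $\Fg(f)$-symmetric pair witnessing $g \in \K{\Fg(f)}$ directly.

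This is precisely the alternative you flag in your last sentence and then set aside. The trade-off: the paper's construction avoids both the induction and the appeal to Lemma~\ref{LemmaGroupCharacterization1}, at the cost of the small observation that $\nu'$ really does reduce to $e$; your approach makes the normal-subgroup structure explicit and requires the adjacent-pair lemma, which you prove cleanly. Either way the combinatorial core is the same non-crossing pairing from Observation~\ref{ObservationPairingElementsInKernelWords}.
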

\begin{proof}
The inclusion $\Ker(\Fg(f)) \supseteq \K{\Fg(f)}$ is obvious. Consider a reduced word $\nu = a_1^{\delta_1} \cdots a_n^{\delta_n} $ of length $n\in \N$ which represents an element $g$ in $\Fg(A)$ with $\delta_i \in \lbrace -1,\, 1\rbrace$, for $1 \leq i \leq n$ and suppose that $g \in \Ker(\Fg(f))$. Then the letters $a_k^{\delta_k}$ of the sequence (or word) $\nu \defeq (a_k^{\delta_k})_{1\leq k \leq n}$ organize themselves in pairs $(a_{i_k}^{\delta_{i_k}},a_{j_k}^{\delta_{j_k}})$ as in Observation \ref{ObservationPairingElementsInKernelWords}. Define the word $\nu' = b_1^{\delta_1} \cdots b_n^{\delta_n}$ such that for each $1 \leq k \leq n$, $b_k \defeq a_{i_k}$. Then by construction $\nu'$ represents an element $h$ which reduces to the empty word in $\Fg(A)$, so that $g = gh^{-1}$. Moreover, $g$ and $h$ form an $f$-symmetric pair, which shows that $g \in \K{\Fg(f)}$. \qedhere
\end{proof}

Note that in \cite{Ren2020} we provide another family of examples of group homomorphisms $f\colon G \to H$ such that $\K{f} = \Ker(f)$ where $f$ is the group homomorphism induced by a morphism of racks between the corresponding \emph{groups of paths}.

\subsection{Two-dimensional symmetric paths}\label{SectionTwoDimensionalSymmetricPaths}
In \cite{Ren2021}, we extend the concept of symmetric path as follows.

\begin{definition}\label{DefinitionK2}
Given a pair of morphisms $f \colon{G \to H}$, $h \colon{G \to K}$ in $\GRP$, and a generating set $A \subseteq G$ (i.e.~such that $G = \langle a \, \mid\, a \in A \rangle_G$), we define (implicitly \emph{with respect to $A$}):
\begin{enumerate}[label=(\roman*)]
\item four elements $g_a$, $g_b$, $g_c$ and $g_d$ in $G$ are \emph{$\langle f,h\rangle$-symmetric (to each other)} if there exists $n\in \N$ and a sequence of quadruples $(a_1,b_1,c_1,d_1)$, $\ldots$, $(a_n,b_n,c_n,d_n)$ in $A^4$ such that for each index $i$, 
\[ \vcenter{\xymatrix @C= 15pt@R=10pt {
a_i \ar@{-}[r]|-{f}  \ar@{..}[d]|-{h}  & b_i  \ar@{..}[d]|-{h} \\
d_i \ar@{-}[r]|-{f}  & c_i
}} \quad \text{i.e.}\ f(a_i) = f(b_i),\ f(d_i) = f(c_i),\ h(a_i) = h(d_i),\ h(b_i) = h(c_i);\]
and finally, for each $1\leq i \leq n$, there is $\delta_i \in \lbrace -1,\, 1\rbrace$ such that: 
\begin{equation}\label{EquationFHSymmetricQuadruple}
g_a = a_1^{\delta_1} \cdots  a_n^{\delta_n}, \quad g_b = b_1^{\delta_1} \cdots  b_n^{\delta_n}, \quad g_c = c_1^{\delta_1} \cdots  c_n^{\delta_n}, \quad g_d = d_1^{\delta_1} \cdots  d_n^{\delta_n}.
\end{equation}
We often call such $g_a$, $g_b$, $g_c$ and $g_d$ an \emph{$\langle f,h\rangle$-symmetric quadruple}.
\item $\K{\langle f,h\rangle}$ is the set of \emph{$\langle f,h\rangle$-symmetric paths}, i.e.~the elements $g \in G$ such that $g = g_a g_b^{-1} g_c g_d^{-1}$ for some $\langle f,h\rangle$-symmetric quadruple $g_a$, $g_b$, $g_c$ and $g_d \in G$.
\end{enumerate}
\end{definition}

\begin{lemma}\label{LemmaGroupCharacterization2}
Given the hypotheses of Definition \ref{DefinitionK2}, the set of $\langle f,h\rangle$-symmetric paths $\K{\langle f,h\rangle}$ defines a normal subgroup of $G$. \qed
\end{lemma}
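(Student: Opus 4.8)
The plan is to mirror the one-dimensional Lemma \ref{LemmaGroupCharacterization1}: write $\K{\langle f,h\rangle}$ as the image of the set $\Sigma$ of $\langle f,h\rangle$-symmetric quadruples under the (non-homomorphic) map $\psi(g_a,g_b,g_c,g_d) = g_a g_b^{-1} g_c g_d^{-1}$, and verify the three defining closure properties of a normal subgroup directly on representatives. The starting observation is that the four square conditions on $(a_i,b_i,c_i,d_i)$ are invariant under the Klein four-group of reflections of the square — the vertical reflection $(a,b,c,d)\mapsto(d,c,b,a)$, the horizontal one $(a,b,c,d)\mapsto(b,a,d,c)$, and their composite — so applying a fixed reflection to every quadruple of a defining sequence again produces an $\langle f,h\rangle$-symmetric quadruple. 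Likewise, concatenating the defining sequences of two symmetric quadruples (with the concatenated sign sequence), and reversing a defining sequence while negating all signs, again yield symmetric quadruples; these are the elementary moves I would exploit.

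First I would dispose of the easy axioms. The identity is the image of the empty sequence ($n=0$). For inverses, the vertical reflection sends a symmetric quadruple $(g_a,g_b,g_c,g_d)$ to the symmetric quadruple $(g_d,g_c,g_b,g_a)$, whose image $g_d g_c^{-1} g_b g_a^{-1}$ is exactly $(g_a g_b^{-1} g_c g_d^{-1})^{-1}$, so $\K{\langle f,h\rangle}$ is closed under inverses. Closure under conjugation is where I would use the hypothesis that $A$ generates $G$ (this is precisely why Definition \ref{DefinitionK2} demands it, in contrast with Definition \ref{DefinitionK1}): given $x\in G$, write $x = x_1^{\epsilon_1}\cdots x_m^{\epsilon_m}$ with $x_j\in A$, and prepend to the defining sequence the constant squares $(x_1,x_1,x_1,x_1),\ldots,(x_m,x_m,x_m,x_m)$ with signs $\epsilon_1,\ldots,\epsilon_m$ (each of which is trivially a valid square). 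The resulting symmetric quadruple is $(x g_a, x g_b, x g_c, x g_d)$, and because of the alternating pattern of $\psi$ its image collapses to $x g_a g_b^{-1} g_c g_d^{-1} x^{-1} = x g x^{-1}$. As a useful consistency check, splitting $g = (g_a g_b^{-1})(g_c g_d^{-1})$ along the two rows and $g = (g_a g_d^{-1})\,g_d(g_b^{-1}g_c)g_d^{-1}$ along the two columns shows, via Lemma \ref{LemmaGroupCharacterization1}, that $\K{\langle f,h\rangle}\subseteq \K{f}\cap\K{h}$, so every symmetric path already lies inside a normal subgroup.

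The main obstacle is closure under products, and it is genuinely two-dimensional. The naive guess — concatenation of the two defining sequences — fails, since it produces the quadruple $(g_a g_a', g_b g_b', g_c g_c', g_d g_d')$ with image $g_a g_a' g_b'^{-1} g_b^{-1} g_c g_c' g_d'^{-1} g_d^{-1}$, not $g g'$; the alternating word $g_a g_b^{-1} g_c g_d^{-1}$ is simply not multiplicative in the quadruple. The route I would take is to show instead that $\K{\langle f,h\rangle}$ coincides with the normal subgroup $N$ generated by the elementary symmetric paths $a b^{-1} c d^{-1}$ (the images of single squares), exactly as in the ``more precisely'' clause of Lemma \ref{LemmaGroupCharacterization1}. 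The inclusion into $N$ would proceed by induction on the length $n$ of the defining sequence, and the reverse inclusion amounts to realizing an arbitrary product of conjugates of elementary paths as the image of a single symmetric quadruple. Each individual conjugate $w(ab^{-1}cd^{-1})w^{-1}$ is already such an image (prepend constant squares encoding $w$, as above), so the whole difficulty is concentrated in turning a product of these into one quadruple with a common index set and sign sequence. This is precisely the rewriting problem at the heart of the paper: it is solved by the two-dimensional analogue of the non-crossing pairing of Observation \ref{ObservationPairingElementsInKernelWords}, organizing the letters of the product into a compatible family of squares. I expect this product step, rather than the identity/inverse/conjugation steps, to be where all the work lies, and it is the part I would be most prepared to borrow from \cite{Ren2021}.
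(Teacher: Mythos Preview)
The paper does not actually supply a proof of this lemma: it is stated with a bare \qed, the argument being deferred to \cite{Ren2021}. So there is nothing to compare against line by line, and your task was really to reconstruct a proof.

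Your treatment of the identity, of inverses (via the reflection $(a,b,c,d)\mapsto(d,c,b,a)$), and of conjugation (via prepended constant squares, using that $A$ generates $G$) is correct and is exactly the kind of elementary manipulation one expects here. Your remark that $\K{\langle f,h\rangle}\subseteq \K f\cap\K h$ is also correct and worth recording.

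The gap is in your handling of products. The detour through the normal subgroup $N$ generated by the elementary paths $ab^{-1}cd^{-1}$ does not buy you anything: showing $N\subseteq\K{\langle f,h\rangle}$ requires exhibiting an arbitrary product of conjugates of elementary paths as a single symmetric path, which \emph{is} the product-closure problem. Worse, the appeal to ``the two-dimensional analogue of the non-crossing pairing of Observation~\ref{ObservationPairingElementsInKernelWords}'' is a wrong turn: that pairing argument is specific to \emph{free} groups and is the engine of Theorem~\ref{LemmaKernelOfFgFFgH}, not of this purely algebraic lemma, which holds for an arbitrary $G$ with generating set $A$.

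A direct argument for products exists and stays at the same elementary level as your other three closure checks. Observe first that the set $\Sigma\subseteq G^4$ of $\langle f,h\rangle$-symmetric quadruples is a subgroup (concatenation and reversal-with-sign-flip, as you noted), and that from the defining sequence of $Q=(g_a,g_b,g_c,g_d)$ one obtains the \emph{degenerate} symmetric quadruple $(g_b,g_b,g_c,g_c)$ via the squares $(b_i,b_i,c_i,c_i)$. Hence
\[
\hat Q \defeq Q\cdot(g_b,g_b,g_c,g_c)^{-1}=(g_ag_b^{-1},\,e,\,e,\,g_dg_c^{-1})\in\Sigma,
\]
and one checks immediately that for any $X\in\Sigma$,
\[
\psi(\hat Q\cdot X)=g_ag_b^{-1}\,\psi(X)\,g_cg_d^{-1}.
\]
Now use the conjugation closure you already proved: since $g'\in\K{\langle f,h\rangle}$, there is $X\in\Sigma$ with $\psi(X)=(g_cg_d^{-1})\,g'\,(g_cg_d^{-1})^{-1}$. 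Then $\hat Q\cdot X\in\Sigma$ and $\psi(\hat Q\cdot X)=g_ag_b^{-1}g_cg_d^{-1}g'=gg'$, which is the missing step.
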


Finally we state and prove the result we are interested in.

\begin{theorem}\label{LemmaKernelOfFgFFgH}
Given two surjective functions $f\colon {A \to B}$ and $h \colon{A \to C}$ such that $\Eq(f) \circ \Eq(h) = \Eq(h) \circ \Eq(f)$, the intersection $\Ker(\Fg(f)) \cap \Ker(\Fg(h))$ of the kernels of the induced group homomorphisms $\Fg(f) \colon {\Fg(A) \to \Fg(B)}$ and $\Fg(h) \colon {\Fg(A) \to \Fg(C)}$ is given by $\K{\langle \Fg(f),\Fg(h)\rangle}$ (with respect to $A$) as in Definition \ref{DefinitionK2}.
\end{theorem}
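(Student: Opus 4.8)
The plan is to prove the two inclusions separately, the inclusion $\K{\langle \Fg(f),\Fg(h)\rangle} \subseteq \Ker(\Fg(f)) \cap \Ker(\Fg(h))$ being the routine one. Given an $\langle \Fg(f),\Fg(h)\rangle$-symmetric path $g = g_a g_b^{-1} g_c g_d^{-1}$ arising (Definition \ref{DefinitionK2}) from a sequence of quadruples $(a_i,b_i,c_i,d_i)$ with exponents $\delta_i$, I would simply apply $\Fg(f)$ and $\Fg(h)$ and read off the defining square relations. Since $f(a_i) = f(b_i)$ and $f(d_i) = f(c_i)$, both $\Fg(f)(g_a g_b^{-1})$ and $\Fg(f)(g_c g_d^{-1})$ are trivial, so $g \in \Ker(\Fg(f))$; and since $h(a_i) = h(d_i)$ and $h(b_i) = h(c_i)$, writing $X = \Fg(h)(g_a) = \Fg(h)(g_d)$ and $Y = \Fg(h)(g_b) = \Fg(h)(g_c)$ gives $\Fg(h)(g) = X Y^{-1} Y X^{-1} = e$, so $g \in \Ker(\Fg(h))$.

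For the reverse inclusion I would fix a reduced word $\nu = a_1^{\delta_1}\cdots a_n^{\delta_n}$ for an element $g$ of the intersection and apply Observation \ref{ObservationPairingElementsInKernelWords} twice, obtaining a non-crossing pairing $\phi$ of its letters that witnesses $g \in \Ker(\Fg(f))$ and a second non-crossing pairing $\psi$ that witnesses $g \in \Ker(\Fg(h))$. Setting $b_k = a_{\phi(k)}$ and $d_k = a_{\psi(k)}$, where $a_{\phi(k)}$ (resp.\ $a_{\psi(k)}$) denotes the first letter of the $f$-pair (resp.\ $h$-pair) through position $k$ exactly as in the proof of Proposition \ref{LemmaKernelOfFgF}, furnishes at each position $k$ the top edge $f(a_k) = f(b_k)$ and the left edge $h(a_k) = h(d_k)$ of a prospective quadruple, together with words $g_b$ and $g_d$ that each reduce to the empty word. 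This is exactly where the hypothesis is needed: to close the square I must produce a fourth letter $c_k$ with $f(d_k) = f(c_k)$ and $h(b_k) = h(c_k)$, and such a $c_k$ exists because $d_k \mathrel{\Eq(h)} a_k \mathrel{\Eq(f)} b_k$ places $(d_k, b_k)$ in $\Eq(h) \circ \Eq(f) = \Eq(f) \circ \Eq(h)$, the equality being precisely the commutation of the kernel pairs.

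Completing every square in this way produces a genuine $\langle \Fg(f),\Fg(h)\rangle$-symmetric quadruple, but because $g_b$ and $g_d$ reduce to the empty word its associated symmetric path equals $g\cdot r$ for a remainder $r = c_1^{\delta_1}\cdots c_n^{\delta_n}$ (which one checks is again in $\Ker(\Fg(f)) \cap \Ker(\Fg(h))$), rather than $g$ itself. Since $\K{\langle \Fg(f),\Fg(h)\rangle}$ is a subgroup by Lemma \ref{LemmaGroupCharacterization2}, this shows only that $g$ is an $\langle \Fg(f),\Fg(h)\rangle$-symmetric path if and only if $r$ is, so the argument must be organized as an induction. I would induct on the number $\mu$ of $f$-pairs $\{k,\phi(k)\}$ along which $h$ disagrees, i.e.\ with $h(a_k) \neq h(a_{\phi(k)})$. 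The base case $\mu = 0$ is immediate and clean: when every $f$-pair is already $h$-compatible, the quadruples $(a_k,\,a_{\phi(k)},\,a_k,\,a_k)$ are legitimate and the corresponding symmetric path telescopes exactly to $g$.

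The step I expect to be the main obstacle is the reduction of $\mu$. The natural move is to multiply $g$ by a conjugate (permissible by the normality in Lemma \ref{LemmaGroupCharacterization2}) of a symmetric path coming from a single commuting square that repairs one $h$-incompatible $f$-pair $\{i,j\}$ together with the $h$-partner of $a_j$ --- the missing corner of that square being supplied, once more, by the commutation hypothesis --- thereby replacing the offending letters by $h$-compatible ones. What requires genuine care is verifying that such a move keeps the representative word reduced (or strictly shortens it) and does not silently introduce new incompatibilities at other pairs; in other words, controlling how the move acts on the two interleaved non-crossing pairings $\phi$ and $\psi$ so that $\mu$ provably drops and the induction is well-founded. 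This combinatorial bookkeeping is, I expect, the technical heart of the proof.
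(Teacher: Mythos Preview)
Your easy inclusion is fine, and your opening move for the hard inclusion --- taking two non-crossing pairings $\phi$ and $\psi$ on the same reduced word and using commutation to close each square --- is natural. But the induction on $\mu$ that you rely on to finish is exactly where the proposal stops, and it is not mere bookkeeping: you have no mechanism that makes $\mu$ drop. Multiplying $g$ by a conjugate of a single-square symmetric path inserts new letters into the word; after this the reduced representative, the pairings $\phi$ and $\psi$, and hence $\mu$, are all recomputed from scratch, and there is no evident monovariant. Likewise your remainder $r = g_c$ carries no visible complexity bound relative to $g$, since the $c_k$ are produced by the commutation hypothesis with no control on the output. So the reverse inclusion is genuinely unfinished.

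The paper sidesteps induction by applying the two pairings \emph{sequentially} rather than in parallel on $\nu$. First it uses the $\Fg(f)$-pairing to write $g = g_a g_b^{-1}$, producing a length-$2n$ word $x_1^{\gamma_1}\cdots x_{2n}^{\gamma_{2n}}$ which, by construction, already satisfies $(x_i,x_{o(i)}) \in \Eq(f)$ for every $i$, where $o(i)=2n+1-i$. Only then does it take an $h$-pairing $p$ of this doubled word and set $y_k$ to be the first letter of the $h$-pair through $k$, so that $(x_k,y_k)\in\Eq(h)$ and $y_{p(k)}=y_k$ (hence the $y$-word represents $e$). What remains is to adjust the $y_k$ so that also $(y_k,y_{o(k)})\in\Eq(f)$, while preserving $y_{p(k)}=y_k$ and $(x_k,y_k)\in\Eq(h)$. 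This is done directly, in one pass, by an algorithm that walks the orbits of the permutation $p\circ o$: at each position $j$ one has $y_j \mathrel{\Eq(h)} x_j \mathrel{\Eq(f)} x_{o(j)} \mathrel{\Eq(h)} y_{o(j)}$, so commutation supplies a $z$ with $(y_j,z)\in\Eq(f)$ and $(z,x_{o(j)})\in\Eq(h)$; one replaces $y_{o(j)}$ by $z$, propagates the same value to $y_{p(o(j))}$, and continues. When an orbit closes, the $\Eq(f)$-condition at the closing pair holds automatically because all values along the orbit have been forced into a single $\Eq(f)$-class. The built-in relation $(x_i,x_{o(i)})\in\Eq(f)$ --- a consequence of doing the $f$-pairing first --- is precisely what makes each step of this walk succeed; your parallel setup on the length-$n$ word does not provide it, which is why you were pushed towards an induction you could not close.
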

\begin{proof}
Given an element $g \in \Ker(\Fg(f)) \cap \Ker(\Fg(h)) \subseteq \Fg(A)$, and following the proof of Lemma \ref{LemmaKernelOfFgF} based on Observation \ref{ObservationPairingElementsInKernelWords}, we may identify an $\Fg(f)$-symmetric pair $g_a = a_1^{\delta_1} \cdots  a_n^{\delta_n}$, $g_b = b_1^{\delta_1} \cdots  b_n^{\delta_n}$ in $\Fg(A)$, such that $g = g_ag_b^{-1}$. Moreover, since $g \in \Ker(\Fg(h))$, by Observation \ref{ObservationPairingElementsInKernelWords}, the elements in the sequence (or word)
\[ (x_i^{\gamma_i})_{1\leq i \leq 2n} \defeq a_1^{\delta_1},\ \ldots,\  a_n^{\delta_n},\ b_n^{-\delta_n},\ \ldots,\ b_1^{-\delta_1}\]
organize themselves in $n$ pairs $(x_i^{\gamma_i},x_j^{\gamma_j})$ such that $i<j$, $(x_i, x_j) \in \Eq(h)$, $\gamma_i = - \gamma_j$, each element of the sequence $(x_i^{\gamma_i})_{1\leq i \leq 2n}$ appears in only one such pair, and given any two such pairs $(x_i^{\gamma_i},x_j^{\gamma_j})$ and $(x_l^{\gamma_l},x_m^{\gamma_l})$, $l< i$ (respectively $l > i$) if and only if $m > j$ (respectively $m<j$). Let us fix such a choice of pairs. For each $k \in \lbrace 1,\ \ldots,\ 2n \rbrace$, we write $(x_{i_k}^{\gamma_{i_k}},x_{j_k}^{\gamma_{j_k}})$ for the unique pair such that either $i_k = k$ or $j_k=k$. 

For what follows, consider the ``paired index'' operation $p$ defined by $p(i_k) \defeq j_k$ and $p(j_k) \defeq i_k$ for any $k\in \lbrace 1,\ \ldots,\ 2n \rbrace$ -- of course $p(p(k))=k$ for all such $k$. Define the operation ``opposite index'' $o(i) = 2n+1-i$ for $1 \leq i \leq 2n$.

We give an example of such a word representing $g \in \Fg(A)$ below, where $n = 7$, the lower lines link elements which are sent to opposites by $f$, and the upper lines link the elements which are paired -- and thus sent to opposites by $h$. We have for instance $(x_{i_{6}}^{\gamma_{i_6}},x_{j_6}^{\gamma_{j_6}}) = (x_{1}^{\gamma_{1}},x_{6}^{\gamma_{6}}) = (x_{i_{1}}^{\gamma_{i_1}},x_{j_1}^{\gamma_{j_1}})$ and similarly $(x_{i_{7}}^{\gamma_{i_7}},x_{j_7}^{\gamma_{j_7}}) = (x_{7}^{\gamma_{7}},x_{14}^{\gamma_{14}}) = (x_{i_{14}}^{\gamma_{i_{14}}},x_{j_{14}}^{\gamma_{j_{14}}}) $, moreover $o(n) = 8$, $p(n) = 14$ and $p(o(n)) = 9$, $o(p(n))=1$:
\[\xymatrix@R=5pt@C=4pt{ \\ 
\\
x_1^{\gamma_1}\ar@{{}{-}{}}@/^7ex/[rrrrr] \ar@{{}{=}{}}[d] & x_2^{\gamma_2} \ar@{{}{=}{}}[d] \ar@{{}{-}{}}@/^4ex/[r] & x_3^{\gamma_3} \ar@{{}{=}{}}[d] & x_4^{\gamma_4} \ar@{{}{=}{}}[d] \ar@{{}{-}{}}@/^4ex/[r] & x_5^{\gamma_5} \ar@{{}{=}{}}[d]  & x_6^{\gamma_6} \ar@{{}{=}{}}[d] & x_7^{\gamma_7} \ar@{{}{=}{}}[d] \ar@<1ex>@{{}{-}{}}@/^8ex/[rrrrrrr] & x_8^{\gamma_8} \ar@{{}{=}{}}[d] \ar@{{}{-}{}}@/^4ex/[r]  & x_9^{\gamma_9} \ar@{{}{=}{}}[d] & x_{10}^{\gamma_{10}} \ar@{{}{=}{}}[d] \ar@{{}{-}{}}@/^6ex/[rrr] & x_{11}^{\gamma_{11}} \ar@{{}{=}{}}[d] \ar@{{}{-}{}}@/^4ex/[r]  & x_{12}^{\gamma_{12}} \ar@{{}{=}{}}[d] & x_{13}^{\gamma_{13}} \ar@{{}{=}{}}[d] & x_{14}^{\gamma_{14}} \ar@{{}{=}{}}[d] \\
a_1^{\delta_1}\ar@{{}{-}{}}@/_15ex/[rrrrrrrrrrrrr] & a_2^{\delta_2} \ar@{{}{-}{}}@/_13ex/[rrrrrrrrrrr] & a_3^{\delta_3} \ar@{{}{-}{}}@/_11ex/[rrrrrrrrr]  & a_4^{\delta_4} \ar@{{}{-}{}}@/_9ex/[rrrrrrr]  & a_5^{\delta_5} \ar@{{}{-}{}}@/_7ex/[rrrrr]   & a_6^{\delta_6} \ar@{{}{-}{}}@/_5ex/[rrr]  & a_7^{\delta_7} \ar@{{}{-}{}}@/_3ex/[r] & b_7^{-\delta_7} & b_6^{-\delta_6} & b_{5}^{-\delta_{5}} & b_{4}^{-\delta_{4}}  & b_{3}^{-\delta_{3}} & b_{2}^{-\delta_{2}} & b_{1}^{-\delta_{1}} \\
\\
\\
\\
\\
\\
}\]

We may rewrite the word representing $g$ as the word 
\[a_1^{\delta_1} \cdots  a_n^{\delta_n} b_n^{-\delta_n} \cdots  b_1^{-\delta_1} y_{2n}^{\delta_{1}} \cdots  y_{n+1}^{\delta_n} y_n^{-\delta_n} \cdots  y_1^{-\delta_1},\]
where the $y_k$ are systematically chosen to be $x_{i_{k}}$ as in the pair $(x_{i_k}^{\gamma_{i_k}},x_{j_k}^{\gamma_{j_k}})$. If we define $\sigma_k \defeq -\delta_k$ for $1\leq k \leq n$ and $\sigma_k \defeq \delta_{2n +1 - k}$ for $n+1 \leq k \leq 2n$, then $y_{p(k)} = y_k$ and  $\sigma_{p(k)} = - \sigma_k$ for all $k\in \lbrace 1,\ \ldots,\ 2n \rbrace$, and the sequence (or word) $(y_k^{-\sigma_k})_{1 \leq k \leq 2n}$ reduces to the empty word, i.e.~it represents the neutral element $e$ in $\Fg(A)$. In our example

\[\xymatrix@R=5pt@C=4pt{ \\
\\
y_1^{\sigma_1}\ar@{{}{-}{}}@/^7ex/[rrrrr] \ar@{{}{=}{}}[d] & y_2^{\sigma_2} \ar@{{}{=}{}}[d] \ar@{{}{-}{}}@/^4ex/[r] & y_3^{\sigma_3} \ar@{{}{=}{}}[d] & y_4^{\sigma_4} \ar@{{}{=}{}}[d] \ar@{{}{-}{}}@/^4ex/[r] & y_5^{\sigma_5} \ar@{{}{=}{}}[d]  & y_6^{\sigma_6} \ar@{{}{=}{}}[d] & y_7^{\sigma_7} \ar@{{}{=}{}}[d] \ar@<1ex>@{{}{-}{}}@/^8ex/[rrrrrrr] & y_8^{\sigma_8} \ar@{{}{=}{}}[d] \ar@{{}{-}{}}@/^4ex/[r]  & y_9^{\sigma_9} \ar@{{}{=}{}}[d] & y_{10}^{\sigma_{10}} \ar@{{}{=}{}}[d] \ar@{{}{-}{}}@/^6ex/[rrr] & y_{11}^{\sigma_{11}} \ar@{{}{=}{}}[d] \ar@{{}{-}{}}@/^4ex/[r]  & y_{12}^{\sigma_{12}} \ar@{{}{=}{}}[d] & y_{13}^{\sigma_{13}} \ar@{{}{=}{}}[d] & y_{14}^{\sigma_{14}} \ar@{{}{=}{}}[d] \\
a_1^{-\delta_1}  & a_2^{-\delta_2} & a_2^{\delta_2}  & a_4^{-\delta_4}  & a_4^{\delta_4}   & a_1^{\delta_1}  & a_7^{-\delta_7}  & b_7^{\delta_7} & b_7^{-\delta_7} & b_{5}^{\delta_{5}}  & b_{4}^{\delta_{4}} & b_{4}^{-\delta_{4}}  & b_{5}^{-\delta_{5}}  & a_{7}^{\delta_{7}} 
}\]
Observe moreover that for all $i \in \lbrace 1,\ \ldots,\ 2n \rbrace$, $(x_i,y_i) \in \Eq(h)$, $\gamma_i = - \sigma_i$ and $\sigma_i = -\sigma_{2n+1-i}$. In order for $g$ to be in $\K{2}(\Fg(f),\Fg(h))$, it then suffices to check that $(y_i, y_{2n+1-i}) \in \Eq(f)$ for each $i \in \lbrace 1,\ \ldots,\ n\rbrace$. Since this is not the case in general, we describe how to ``algorithmically'' replace the value of each $y_i$ ($y_i \mapsto d_i$ for $1 \leq i \leq n$ and $y_i \mapsto c_{2n+1-i}$ for $n+1 \leq i \leq 2n$) in such a way that for each index $1\leq i \leq 2n$, it is still the case that $y_{p(i)} = y_i$, moreover, the previous and new value of $y_i$ are identified by $h$ and finally $(c_k,d_k) \in \Eq(f)$ for each $k \in \lbrace 1,\ \ldots,\ n\rbrace$. The resulting word below still represents $g$ which thus satisfies all the conditions for being an element of $\K{2}(\Fg(f),\Fg(h))$.
\[ \xymatrix@R=20pt @C=5pt{\\
\\
a_1^{\delta_1} \ar@{{}{-}{}}@/_7ex/[rrrrr] \ar@{{}{-}{}}@/^13ex/[rrrrrrrrrrr] & \cdots \ar@{{}{-}{}}@/_5ex/[rrr] \ar@{{}{-}{}}@/^11ex/[rrrrrrrrr]  & a_n^{\delta_n} \ar@{{}{-}{}}@/_3ex/[r] \ar@{{}{-}{}}@/^9ex/[rrrrrrr] & b_n^{-\delta_n} \ar@{{}{-}{}}@/^7ex/[rrrrr] & \cdots \ar@{{}{-}{}}@/^5ex/[rrr] & b_1^{-\delta_1} \ar@{{}{-}{}}@/^3ex/[r] & c_1^{\delta_1} \ar@{{}{-}{}}@/_7ex/[rrrrr] & \cdots \ar@{{}{-}{}}@/_5ex/[rrr] & c_n^{\delta_n} \ar@{{}{-}{}}@/_3ex/[r] & d_n^{-\delta_n} & \cdots &  d_1^{\delta_1}\\
\\
} \] 

Let us illustrate this rewriting method on our example. The general method is then described below in Algorithm \ref{Algorithm}. We start by looking at the pair $(y_7,y_8) = (a_7,b_7)$ and we observe that it is in $\Eq(f)$. Hence we do not change the values of $y_7$ and $y_8$, i.e.~define $d_7 \defeq a_7$ and $c_7 \defeq b_7$ respectively. Now once we have set the values of $d_7$ and $c_7$, we do not want to modify these anymore. But remember that $y_{p(7)} = y_{14}$ has to be equal to $y_{7} = d_7$ and similarly $y_{p(o(7))} = y_{p(8)} = y_9$ has to be equal to $y_8 = c_7$. We thus fix $y_9 = c_6 \defeq  c_7 = b_7$ and $y_{14} = c_1 \defeq d_7 = a_7$. We then proceed by looking at the pair $(y_{p(o(7))},y_{o(p(o(7)))}) = (y_9,y_6) = (b_7,a_1)$ which is not known to be in $\Eq(f)$. We are then going to modify the value of $y_{6}$ accordingly, since the value of $y_9$ is set. Observe that since
\begin{equation}\label{EquationZ}
\vcenter{\xymatrix@R=10pt@C=4pt{
y_9 = c_6 = b_7  \ar@{{}{-}{}}@/^4ex/[r]|-{h} & x_9 = b_6 \ar@{{}{-}{}}@/_4ex/[r]|-{f} & x_{6} = a_6 \ar@{{}{-}{}}@/^4ex/[r]|-{h} & y_{6} = a_1,
}} \quad \text{there is } z \in A\text{ such that} \quad
\vcenter{\xymatrix@R=10pt@C=4pt{
y_9 \ar@{{}{-}{}}@/_4ex/[r]|-{f} & z \ar@{{}{-}{}}@/^4ex/[r]|-{h} & x_{6} \ar@{{}{-}{}}@/^4ex/[r]|-{h} & y_{6},
}}
\end{equation} 
where we link two elements with a line labelled by $h$ if these are identified by $h$; similarly for $f$.

We then let $d_6 \defeq z$ be the new value of $y_{6}$. Then $y_{p(6)} = y_1$ has to be set to $d_1 \defeq d_6$. We should then look at the pair $(y_1,y_{14})$, but both values $d_1$ and $c_1$ have been defined already. This is not a problem since $(y_1,y_{14}) \in \Eq(f)$ by construction ($f(y_1) = f(d_6) = f(c_6) = f(c_7)= f(c_1) = f(y_{14})$).

We have just completed an \emph{Inner loop} in our method. We may then start over by choosing any of the remaining indices $i$, such that the value of $y_i$ has not been set yet. Take for instance $i=2$, we look at the pair $(y_2,y_{13}) = (a_2,b_5)$ which is not known to be in $\Eq(f)$. We thus modify the value of $y_{13}$ accordingly:
since
\[\vcenter{\xymatrix@R=10pt@C=4pt{
y_2 = a_2  \ar@{{}{-}{}}@/^4ex/[r]|-{h} & x_2 = a_2 \ar@{{}{-}{}}@/_4ex/[r]|-{f} & x_{13} = b_2 \ar@{{}{-}{}}@/^4ex/[r]|-{h} & y_{13} = b_5,
}} \quad \text{there is } z' \in A\text{ such that} \quad
\vcenter{\xymatrix@R=10pt@C=4pt{
y_2 \ar@{{}{-}{}}@/_4ex/[r]|-{f} & z' \ar@{{}{-}{}}@/^4ex/[r]|-{h} & x_{13} \ar@{{}{-}{}}@/^4ex/[r]|-{h} & y_{13},
}} \]
we choose $y_{13} = c_2 \defeq z'$ and keep $y_2 = d_2 \defeq a_2$. As a consequence we define $y_{p(2)} = y_3 = d_3 \defeq a_2$ and $y_{p(13)} = y_{10} = c_5 \defeq c_2$. We must then look at $(y_{10},y_5) = (c_2, a_4)$ which is not known to be in $\Eq(f)$. Then since 
\[\vcenter{\xymatrix@R=10pt@C=4pt{
y_{10} = c_2  \ar@{{}{-}{}}@/^4ex/[r]|-{h} & x_{10} = b_5 \ar@{{}{-}{}}@/_4ex/[r]|-{f} & x_{5} = a_5 \ar@{{}{-}{}}@/^4ex/[r]|-{h} & y_{5} = a_4,
}} \quad \text{there is } z'' \in A\text{ such that} \quad
\vcenter{\xymatrix@R=10pt@C=4pt{
y_{10} \ar@{{}{-}{}}@/_4ex/[r]|-{f} & z'' \ar@{{}{-}{}}@/^4ex/[r]|-{h} & x_{5} \ar@{{}{-}{}}@/^4ex/[r]|-{h} & y_{5},
}} \]
we define $y_{5} = d_5 \defeq z''$ and thus $y_{p(5)} = y_{4} = d_4 \defeq d_5$. Similarly, we define $y_{11} = c_4$ such that $(y_4, y_{11}) \in \Eq(f)$ and let $y_{12} = c_3 \defeq c_4$. Now observe that $y_3 = d_3 = a_2$ was defined in such a way that $(y_{12},y_3) \in \Eq(f)$. This ends a second \emph{Inner loop} in the method. The values of all the $y_i$ have been set in the appropriate way, which ends the \emph{Outer loop} of our method. In general this method can be implemented as in Algorithm \ref{Algorithm} below.

\begin{algorithm}\label{Algorithm}
Declare two variables $m$ and $l$, which range over the indices $\lbrace 1,\ \ldots,\ 2n\rbrace$, and which we use to run the two embedded loops of the algorithm. Define the variable $I$ which is the set of ``indices not yet visited''. We use the symbol $\defeq$ to change the value of these variables. 

Start by setting $m \defeq n$ and $I \defeq \lbrace 1,\ \ldots,\ 2n\rbrace$.

\subsubsection*{Outer loop}
Define $I \defeq I \setminus \lbrace n,\ o(m) \rbrace$ and start by running $\Test{m}$ defined below. Then:
\begin{enumerate}
\item If $p(m) \neq o(m)$, replace the values of $y_{p(m)}$ and $y_{p(o(m))}$ by those of $y_{m}$ and $y_{o(m)}$ respectively. Define $l \defeq p(o(m))$ and proceed to the \textit{Inner loop}.
\item Otherwise (if $p(m) = o(m)$), proceed to \textit{Switch}.
\end{enumerate}

\subsubsection*{Inner loop}
Define $I \defeq I \setminus \lbrace l,\ o(l)\rbrace$ and then:
\begin{enumerate}
\item If $o(l) \neq p(m)$, then the value of $y_{o(l)}$ has not been modified yet. Run $\Test{l}$, and replace the value of $y_{p(o(l))}$ by that of $y_{o(l)}$. Observe that $y_{p(m)} = y_{m}$, $y_{o(m)} = y_{p(o(m))}$, $y_l$, $y_{o(l)} = y_{p(o(l))}$ are all identified by $f$ by construction. Finally, redefine $l \defeq p(o(l))$ and proceed to the \textit{Inner loop}.
\item Otherwise ($o(l) = p(m)$), proceed to \textit{Switch}.
\end{enumerate}

\subsubsection*{Switch} 
\begin{enumerate} 
\item If $I$ is not empty, choose any $i \in I$ and define $m \defeq i$. Proceed to the \textit{Outer loop}.
\item Otherwise stop the algorithm.
\end{enumerate}

\subsubsection*{TestPair} 
Given an index $j \in \lbrace 1,\ \ldots,\ 2n\rbrace$, define the method $\Test{j}$ as follows.
\begin{enumerate}
\item If $(y_j,y_{o(j)}) \in \Eq(f)$, keep these as they are;
\item otherwise, since
\[\vcenter{\xymatrix@R=10pt@C=4pt{
y_j \ar@{{}{-}{}}@/^4ex/[r]|-{h} & x_j \ar@{{}{-}{}}@/_4ex/[r]|-{f} & x_{o(j)} \ar@{{}{-}{}}@/^4ex/[r]|-{h} & y_{o(j)},
}} \quad \text{there exists } z \in A \text{ such that} \quad
\vcenter{\xymatrix@R=10pt@C=4pt{
y_j \ar@{{}{-}{}}@/_4ex/[r]|-{f} & z \ar@{{}{-}{}}@/^4ex/[r]|-{h} & x_{o(j)} \ar@{{}{-}{}}@/^4ex/[r]|-{h} & y_{o(j)}.
}} \]
Then replace the value of $y_{o(j)}$ by $z$.
\end{enumerate}
\end{algorithm}

Note that $l$ and $m$ keep visiting new indices, which have not yet been visited by $l$, $o(l)$, $m$ or $o(m)$. Since there is a finite amount of such indices and $o(p(m))$ is one of those, the \emph{Inner loop} always reaches an end: $o(l) = p(m)$. When this happens, by construction $y_l$ and $y_{p(m)} = y_{o(l)}$ are in relation by $\Eq(f)$. We can thus proceed to the \emph{Outer loop} after redefining $m$ to be any of the remaining indices in $I$. No value of any $y_i$ is changed twice, but all values are visited once so that the resulting sequence is as required.
\end{proof}

\begin{remark} Note that the existence of a $z$ as in Equation \eqref{EquationZ} does not mean that there is a workable algorithm to find this $z$. However, assuming that such a procedure exists, one can implement the whole method on a computer.
\end{remark}

\bibliography{AlgorithmicAnecdotes.bib}
\bibliographystyle{amsplain-nodash}

\end{document}